\newtheorem{theorem}{Theorem}[section]
\newtheorem{proposition}[theorem]{Proposition}
\newtheorem{lemma}[theorem]{Lemma}
\theoremstyle{definition}
\newtheorem{definition}[theorem]{Definition}
\newtheorem{remark}[theorem]{Remark}
\newtheorem{example}[theorem]{Example}
\newcommand{\C}{\mathbb{C}}
\newcommand{\Z}{\mathcal{Z}}
\newcommand{\J}{\mathcal{J}}
\newcommand{\D}{\bar{\partial}}
\newcommand{\dbar}{\bar{\partial}}
\newcommand{\Res}{\operatorname{Res}}
\begin{document}

\title[]{A local Grothendieck duality theorem for Cohen-Macaulay ideals}

\author{Johannes Lundqvist}
\thanks{}

\address{Department of Mathematics\newline Stockholm University\newline SE-106 91 Stockholm\newline Sweden}

\email{johannes@math.su.se}

\begin{abstract}
We give a new proof of a recent result due to Mats Andersson and Elizabeth Wulcan,
generalizing the local Grothendieck duality theorem. It can also be seen
as a generalization of a previous result by Mikael Passare. Our method does
not require the use of the Hironaka desingularization theorem and it provides a semi-explicit 
realization of the residue that is annihilated by functions from the given
ideal.
\end{abstract}

\maketitle


\section{Introduction}
Let $\mathcal{O}_0$ be the ring of germs of holomorphic functions at $0\in\mathbb{C}^n$ and let $\Omega_0^n$ denote the germs of holomorphic $(n,0)$-forms.
The ring $\mathcal{O}_0$ is Noetherian and hence all ideals $\mathcal{J}\subset\mathcal{O}_0$ will be finitely generated. Assume first that $\J$ is generated by 
$n$ functions $f=(f_1,\ldots,f_n)$ and that their common zero set consists of one single point, the origin.
Then the Grothendieck residue, $\Res_f$, is defined as 
\begin{equation}\label{OrginalRes}
\Res_f(\xi) = \left( \frac{1}{2\pi i}\right)^n\int_{|f_i(z)|=\epsilon}\frac{\xi(z)}{f_1(z)\cdots f_n(z)},\quad \xi\in\Omega_0^n,
\end{equation}
and is independent of $\epsilon$.
Observe that we can multiply $\Res_f$ with a holomorphic germ $\varphi$ by letting $\varphi\Res_f(\xi)=\Res_f(\varphi\xi)$.
There is a well known theorem, see for example \cite{GH}, saying that $\J$ is equal to the annihilator ideal of $\Res_f$, i.e.,
\begin{equation}
\varphi\Res_f(\xi)=0,\quad \forall\xi\in\Omega_0^n, \quad\text{ iff}\quad \varphi\in\J.
\end{equation}
We will refer to that theorem as the local Grothendieck duality theorem.

There is a cohomological interpretation of the Grothendieck residue.
Let $\Omega$ be an open neighborhood of $0$ such that $f_j$, $j=1,\ldots,n$, and $\xi$ are defined there.
Let $D_j=\{z;f_j(z)=0\}$ and $U_j=\Omega\setminus D_j$.
Then $\xi /f_1\ldots f_n$ can be considered as an $(n-1)$-cochain for the sheaf of holomorphic $(n,0)$-forms and the covering $\{ U_j \}_{j=1,\ldots,n}$ of $\Omega\setminus\{0\}$. Since there are no $(n-1)$-coboundaries, $\xi /f_1\ldots f_n$ defines a \v Cech cohomology class and by the Dolbeault theorem, \cite{GH}, we get a Dolbeault cohomology class $\omega^{\xi}$ of bidegree $(n,n-1)$.
The Grothendieck residue can now be rewritten as integration of $\omega^{\xi}$ over the boundary of a small neighborhood, $D$, of the origin,
\begin{equation}\label{GrotRes}
\Res_f(\xi)=\int_{\partial D} \omega^{\xi}.
\end{equation}
A proof of this can be seen in \cite{GH} where one can also see a proof of the fact that the class $\omega$ can be 
represented by the explicit form
\begin{equation*}
\omega^{\xi} = \left(\frac{1}{2\pi i}\right)^n n!\frac{\sum (-1)^{i-1}\bar{f_i} d\bar{f_1}\wedge\ldots\wedge \widehat{d\bar{f_i}}\wedge\ldots\wedge d\bar{f_n}\wedge\xi}{\left(   |f_1|^2 +\ldots +|f_n|^2   \right)^n },
\end{equation*}
where $\widehat{d\bar{f_i}}$ means that $d\bar{f_i}$ is omitted.

Assume now that the ideal $\J$ is generated by $p$ functions $f_1,\ldots,f_p$ and that we do not have any restrictions on the common zero set $\mathcal{Z}$. With the use of Hironaka's desingularization theorem one can define a residue current
\begin{equation}\label{CH}
\dbar\frac{1}{f_p}\wedge\ldots\wedge\dbar\frac{1}{f_1}\cdot\xi = \lim_{\delta\to 0}\int_{|f_i(z)|=\epsilon_i(\delta)}\frac{\xi(z)}{f_1(z)\cdots f_p(z)},
\end{equation}
for smooth test forms $\xi$.
In order for the limit to exist it has to be taken over a so called admissible path meaning that $\epsilon_i(\delta)$ tends faster to zero than any power of $\epsilon_{i+1}(\delta)$. The current \eqref{CH} is called the Coleff-Herrera product and was defined in \cite{CH}. In the special case of $p=n$ and $\mathcal{Z}=\{0\}$ we get the Grothendieck residue if we restrict the Coleff-Herrera product to the holomorphic germs.

In \cite{DickensteinSessa} and \cite{Passare} Dickenstein-Sessa and Passare independently 
proved that the Coleff-Herrera product satisfies the duality theorem, i.e, that the annihilator ideal of \eqref{CH} is equal to $\mathcal{J}$, in the case when $\J$ defines a complete intersection. That is, the case when the codimension of $\J$ is equal to $p$.
Passare also defines a cohomological residue satisfying the duality theorem in that case generalizing the Grothendieck duality theorem to complete intersections.
In \cite{AnderssonWulcan} Andersson and Wulcan construct a residue current that satisfies a duality theorem for arbitrary ideals and coincides with the Coleff-Herrera product if the ideal defines a complete intersection. They also show that the residue can be expressed cohomologically in the Cohen-Macaulay case.

In this paper we find a new proof of the result of Andersson and Wulcan in the Cohen-Macaulay case avoiding using Hironaka desingularization used in \cite{AnderssonWulcan}.
The residue is similar to (\ref{GrotRes}) and
is obtained from a double complex defined from a free resolution of $\J$.
In the special case of a complete intersection it coincides with the cohomological residue in \cite{Passare}.


\section{Set up and statement}

Remember that a local Noetherian ring $R$ is called Cohen-Macaulay if the maximal length of a regular sequence in $R$ is equal to the dimension of $R$. 
An ideal $\J\subset R$ is called Cohen-Macaulay if $R/\J$ is Cohen-Macaulay. 
All ideals in $\mathcal{O}_0$ whose variety is zero-dimensional are Cohen-Macaulay.
Also, all ideals in $\mathcal{O}_0$ that define a complete intersection are Cohen-Macaulay but the converse is not true. For example, the ideal $\langle z^2,zw,w^2 \rangle\subset\mathcal{O}_0$ is Cohen-Macaulay (because its variety is zero-dimensional) but do not define a complete intersection.

Assume that the common zero set $\mathcal{Z}$ of $f_1,\ldots,f_m\in\mathcal{O}_0$ has pure codimension $p$ and that $\J=\langle f_1,\ldots,f_m\rangle$ is Cohen-Macaulay.
The fact that $\J$ is Cohen-Macaulay is equivalent (because of the Auslander-Buchsbaum formula \cite{Eisenbud}) 
to the existence of a minimal free resolution of $\mathcal{O}_0 / \J$
\begin{equation}\label{complex1}
0\to\mathcal{O}^{\oplus r_p}_0 \overset{f^p}{\to}\mathcal{O}^{\oplus r_{p-1}}_0\overset{f^{p-1}}{\to}
\ldots\overset{f^2}{\to}\mathcal{O}^{\oplus r_1}_0\overset{f^1}{\to}\mathcal{O}_0\to\mathcal{O}_0 / \J\to 0
\end{equation}
having length $p$.
Here $f^1$ can be represented as the row-matrix where the the i:th column is $f_i$ and $f^k, k>1$, are matrices with holomorphic functions as entries.
Oka's lemma, \cite{GH}, implies that there exists a small neighborhood $\Omega$ around $0$ such that the complex
\begin{equation}\label{firstkompl}
0\to\mathcal{O}^{\oplus r_p}_z \overset{f^p}{\to}\mathcal{O}^{\oplus r_{p-1}}_z\overset{f^{p-1}}{\to}
\ldots\overset{f^2}{\to}\mathcal{O}^{\oplus r_1}_z\overset{f^1}{\to}\mathcal{O}_z\to\mathcal{O}_z / \J\to 0
\end{equation}
is exact for all $z\in\Omega$.

\noindent
If we let $E_j$ be a trivial vector bundle of rank $r_j$ over $\Omega$
we get an induced complex of trivial vector bundles
\begin{equation}\label{vectorkomplex}
0\to E_p\overset{f^p}{\to}E_{p-1} \overset{f^{p-1}}{\to}\ldots
\overset{f^2}{\to}E_1\overset{f^1}{\to}E_0\to 0.
\end{equation}
Note that $\mathcal{O}_z / \J=0$ if $z\in\Omega\setminus \mathcal{Z}$ and that 
the complex (\ref{vectorkomplex}) is pointwise exact there.
Indeed, assume that $k<p$ and that $(z_0,x)\in(\Omega\setminus\mathcal{Z})\times\mathbb{C}^{r_k}$ is a point such that $f^k(z_0)x=0$. 
Note first that there exist a non-zero function $\varphi\in\mathcal{O}_{z_0}^{\oplus r_{k}}$ such that $f^k\varphi=0$ because otherwise $\operatorname{Ker}f^k=\{0\}$ and hence $k=p$ which is a contradiction since we assumed that $k<p$.
Take such a $\varphi$. We know from the exactness of \eqref{firstkompl} that there exists $\psi\in\mathcal{O}_{z_0}^{\oplus r_{k+1}}$ such that $f^{k+1}\psi=\varphi$. By scaling we can assume that $\varphi(z_0)=x$ and by choosing $y=\psi(z_0)$ we get that the point $(z_0,y)\in(\Omega\setminus\mathcal{Z})\times\mathbb{C}^{r_{k+1}}$ is mapped to $(z_0,x)$.

The exactness of \eqref{vectorkomplex} and a simple induction over $k$ shows that $f^k$ has constant rank in $\Omega\setminus\mathcal{Z}$ and thus $\operatorname{Ker}f^k$ is a sub-bundle of $E_k$. 
Since $f^{k+1}$ is a pointwise surjection to the sub-bundle $\operatorname{Ker}f^k$ we get that the corresponding complex of smooth sections is exact.
We have just proved the following proposition.

\begin{proposition}\label{CompProp}
Let $\mathcal{E}_{0,q}(\Omega,E_k)$ denote the set of smooth\\ $(0,q)$-sections of $E_k$ over $\Omega$. With $f^k,E_k,\Omega$ and $\mathcal{Z}$ as above, the complex
\begin{align*}
0\to \mathcal{E}_{0,q}(\Omega\setminus\mathcal{Z},E_p)\overset{f^p}{\to}\mathcal{E}_{0,q}(\Omega\setminus\mathcal{Z},E_{p-1})&\overset{f^{p-1}}{\to}\ldots\\
&\ldots\overset{f^1}{\to}\mathcal{E}_{0,q}(\Omega\setminus\mathcal{Z},E_0)\to 0
\end{align*}
is exact for all $q$.
\end{proposition}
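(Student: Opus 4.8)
The plan is to read off the exactness of the complex of smooth sections directly from the geometric facts just established, without returning to the sheaf-theoretic picture. On $\Omega\setminus\mathcal{Z}$ each $f^k$ has constant rank, so $\operatorname{Ker}f^k$ and $\operatorname{Im}f^k$ are smooth sub-bundles of $E_k$ and $E_{k-1}$ respectively, and the pointwise exactness of \eqref{vectorkomplex} gives $\operatorname{Im}f^{k+1}=\operatorname{Ker}f^k$ as sub-bundles, where I adopt the convention $\operatorname{Ker}f^0=E_0$ (legitimate because $f^1$ is a pointwise surjection onto $E_0$ away from $\mathcal{Z}$). Thus for each $k$ the bundle map $f^{k+1}\colon E_{k+1}\to\operatorname{Ker}f^k$ is a surjective morphism of smooth vector bundles over $\Omega\setminus\mathcal{Z}$, and the whole statement reduces to producing, for each such map, a smooth right inverse at the level of $(0,q)$-sections.

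First I would isolate the linear-algebra lemma that does the work: a surjective morphism $\pi\colon F\to G$ of smooth vector bundles of constant rank admits a global smooth splitting $\sigma\colon G\to F$ with $\pi\sigma=\mathrm{id}_G$. To build $\sigma$ I equip $F$ with the standard Hermitian metric (available since $F$ is trivial), note that constant rank makes $\operatorname{Ker}\pi$ a smooth sub-bundle, and observe that $\pi$ restricts to a fibrewise isomorphism $(\operatorname{Ker}\pi)^{\perp}\to G$; its inverse, composed with the inclusion $(\operatorname{Ker}\pi)^{\perp}\hookrightarrow F$, is the desired $\sigma$, and it is smooth precisely because the orthogonal projection onto $\operatorname{Ker}\pi$ depends smoothly on $z$ when the rank is constant. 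Applying this with $\pi=f^{k+1}$ and $G=\operatorname{Ker}f^k$ produces a smooth bundle splitting $\sigma_k$.

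With $\sigma_k$ in hand I would check exactness at each spot $E_k$. Given $\alpha\in\mathcal{E}_{0,q}(\Omega\setminus\mathcal{Z},E_k)$ with $f^k\alpha=0$, pointwise $\alpha$ takes values in $\operatorname{Ker}f^k$ tensored with $(0,q)$-forms, so $\beta:=\sigma_k\alpha$ is a well-defined smooth $(0,q)$-section of $E_{k+1}$. Since $f^{k+1}$ acts through a holomorphic matrix and hence is $\mathcal{E}_{0,0}$-linear and commutes with wedging by $(0,q)$-forms, the relation $f^{k+1}\sigma_k=\mathrm{id}$ on $\operatorname{Ker}f^k$ passes to sections and gives $f^{k+1}\beta=\alpha$; this yields exactness at $E_k$ for $1\le k\le p-1$ and, under the convention above, surjectivity of $f^1$ at $E_0$, while injectivity of $f^p$ on sections is immediate from its pointwise injectivity. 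I expect the one genuinely delicate point to be the passage from pointwise exactness to a sub-bundle statement, which rests entirely on the constant-rank property established just before the proposition; this is what allows the splittings to be chosen smoothly and globally. It is worth emphasizing that no partition of unity and no hypothesis on the topology of $\Omega\setminus\mathcal{Z}$ is required, because the metric splitting is global from the outset.
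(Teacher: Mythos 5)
Your proposal is correct and follows essentially the same route as the paper: the paper's proof consists precisely of the observations that constant rank makes $\operatorname{Ker}f^k$ a sub-bundle and that $f^{k+1}$ is then a pointwise surjection onto this sub-bundle, from which exactness on smooth sections follows. Your explicit metric-splitting lemma simply fills in the detail that the paper leaves implicit in that last step, so the two arguments are the same in substance.
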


We are now ready to define the complex that will give us the cohomology classes we need in order to state the main theorem.
The operators $f^j$ and $\bar{\partial}$ define the double complex

\begin{equation}\label{DoubleComplex}
\begin{CD}
@. \vdots    @. \vdots  @.\\
@.       @A{\bar{\partial}}AA      @A{\bar{\partial}}AA		@.\\
\ldots @>{f^{k+1}}>> \mathcal{E}_{0,q+1}(\Omega\setminus\mathcal{Z},E_k)   @>{f^k}>>   \mathcal{E}_{0,q+1}(\Omega\setminus\mathcal{Z},E_{k-1}) @>{f^{k-1}}>>\ldots\\
@.       @A{\bar{\partial}}AA      @A{\bar{\partial}}AA		@.\\
\ldots @>{f^{k+1}}>> \mathcal{E}_{0,q}(\Omega\setminus\mathcal{Z},E_k)   @>{f^k}>>   \mathcal{E}_{0,q}(\Omega\setminus\mathcal{Z},E_{k-1}) @>{f^{k-1}}>>\ldots\\
@.       @A{\bar{\partial}}AA      @A{\bar{\partial}}AA		@.\\
@. \vdots    @. \vdots  @.
\end{CD}.
\end{equation}
\noindent
Let $\mathcal{L}$ be the total complex of \eqref{DoubleComplex}, i.e.,
\begin{equation}\label{LKomplex}
\mathcal{L}:\ldots \overset{\nabla_f}{\to}\mathcal{L}^r(\Omega\setminus\Z)\overset{\nabla_f}{\to}\mathcal{L}^{r+1}(\Omega\setminus\Z)\overset{\nabla_f}{\to}\ldots.
\end{equation}
where 
\begin{equation*}
\mathcal{L}^r(\Omega\setminus\Z)= \bigoplus_k\mathcal{E}_{0,k+r}(\Omega\setminus\Z,E_k)
\end{equation*}
and
\begin{equation*}
\nabla_f: \mathcal{L}^r(\Omega\setminus\Z)\to\mathcal{L}^{r+1}(\Omega\setminus\Z) \text{ is defined as }
\nabla_f=f-\bar{\partial}.
\end{equation*}
Here $f$ should be interpreted as $(-1)^qf^k$ on $\mathcal{E}_{0,q}(\Omega\setminus\mathcal{Z},E_k)$.
We know from Proposition \ref{CompProp} that the double complex \eqref{DoubleComplex} has exact rows and by a standard spectral sequence argument
we see that the total complex $\mathcal{L}$ is exact.

Now, let $\varphi\in\mathcal{O}_0$. Then we can view $\varphi$ as an element in $\mathcal{L}^{0}(\Omega\setminus\mathcal{Z})$ for some $\Omega$ such that the complex $\mathcal{L}$ is exact. Moreover, $\nabla_f\varphi=0$ so there exists an element $v$ in $\mathcal{L}^{-1}(\Omega\setminus\mathcal{Z})$ such that $\nabla_f v = \varphi$. 
If we write $v=v_1+\ldots + v_p$ where $v_k\in\mathcal{E}_{0,k-1}(\Omega\setminus\mathcal{Z},E_k)$ we see that $f v_1 = \varphi$ and $f v_j = \bar{\partial}v_{j-1}$ for $j=2,\ldots p$. Especially we get that $\bar{\partial}v_p = 0$. Now, if $v,w\in\mathcal{L}^{-1}(\Omega\setminus\mathcal{Z})$ are such that $\nabla_f v = \nabla_f w = \varphi$,
then there exists an element $u\in\mathcal{L}^{-2}(\Omega\setminus{Z})$ such that $\nabla_f u = v - w$ and thus $\bar{\partial}u_p=v_p-w_p$. This means that $v_p$ (a vector of $r_p$ $\bar{\partial}$-closed smooth $(0,p-1)$-forms) is a representative of a Dolbeault cohomology class $\omega^{\varphi}$ of bidegree $(0,p-1)$ depending only on $\varphi$ and $f$, i.e., we have a map
\begin{equation*}
\mathcal{O}_0\ni\varphi\mapsto \omega^{\varphi}=[\alpha_1,\alpha_2,\ldots,\alpha_{r_p}]\in \left( H_{\bar{\partial}}^{(0,p-1)}(\Omega\setminus\mathcal{Z})\right)^{\oplus r_p}.
\end{equation*}
Note that these cohomology classes form a $\mathcal{O}_0$-module and that $\varphi\omega^1=\omega^{\varphi}$ for $\varphi\in\mathcal{O}_0$.

Let X be a subset of $\mathbb{C}^n$ and denote $\omega^1$ by $\omega$. By $\mathcal{D}^{p,q}(X)$ we mean the space of all $(p,q)$-forms that have compact support on X.
\begin{definition}\label{def}
The residue
\begin{equation*}
\Res_f:\{ \xi\in\mathcal{D}_0^{n,n-p}(\Omega);\D\xi = 0 \text{ close to } \mathcal{Z} \} \to \C
\end{equation*}
is given by
\begin{equation}
\Res_f(\xi) = \int\D\xi\wedge\omega.
\end{equation}
\end{definition}
\noindent
The fact that $\Res_f$ is well-defined, i.e., does not depend on the choice of representant of $\omega$, is a direct consequence of Stokes' theorem.
We define multiplication with a holomorphic germ $\varphi$ analogous to the case of the Grothendieck residue, i.e., $\varphi\Res_f(\xi)=\Res_f(\varphi\xi)$
and we thus get
\begin{equation*}
\varphi\Res_f(\xi) = \Res_f(\varphi\xi)=\int\D(\varphi\xi)\wedge\omega=\int(\D(\xi))\wedge\varphi\omega=\int\D\xi\wedge\omega^{\varphi}.
\end{equation*}

\begin{remark}\label{remark}\rm{
If $\mathcal{Z}$ consists of one single point we can rewrite $\Res_f$ in a different way.
Let $\xi\in\mathcal{D}_0^{n,0}$ such that $\D\xi = 0$ close to $0$. Then there exists a compact set $D\subset\Omega$, with 0 in the interior, such that $\D\xi = 0$ on $D$. If $\tilde{\xi}$ is a holomorphic $(n,0)$-form that satisfy $\tilde{\xi}=\xi$ on $D$ we get}
\begin{align*}
\int \bar{\partial}\xi\wedge\omega=
\int_{\mathbb{C}^n \setminus D}\bar{\partial}\xi\wedge\omega=
-\int_{\partial D}\xi\wedge\omega=
-\int_{\partial D}\tilde{\xi}\wedge\omega.
\end{align*}
{\rm We will use this in Example \ref{example} below.}
\end{remark}

The following theorem is the main result in this paper.
\begin{theorem}\label{Main}
Assume that $f_1,\ldots,f_m\in\mathcal{O}_0$ and that the ideal $\J$ generated by the $f_i$:s is Cohen-Macaulay. Then the following are equivalent:
\begin{enumerate}
 \item[(i)] $\varphi\in\J$
 \item[(ii)] $\omega^{\varphi} = 0$
 \item[(iii)] $\varphi\Res_f = 0$
\end{enumerate}
\end{theorem}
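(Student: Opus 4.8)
The plan is to establish the four implications $(i)\Rightarrow(ii)$, $(ii)\Rightarrow(iii)$, $(ii)\Rightarrow(i)$ and $(iii)\Rightarrow(ii)$; the first two are elementary and the last two carry the content of the duality. For $(i)\Rightarrow(ii)$, if $\varphi\in\J$ then $\varphi=f^1g$ with $g\in\mathcal{O}^{\oplus r_1}$ holomorphic, so $v=g$ already solves $\nabla_fv=\varphi$ with vanishing higher components; in particular $v_p=0$ represents $\omega^\varphi$ and $\omega^\varphi=0$. For $(ii)\Rightarrow(iii)$, if $\omega^\varphi=0$ pick a smooth primitive $\beta$ on $\Omega\setminus\Z$ with $v_p=\D\beta$. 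Because $\D\xi=0$ near $\Z$, the form $\D\xi\wedge\beta$ vanishes near $\Z$ and extends by zero to a compactly supported smooth form on $\Omega$; since $\D\xi\wedge v_p=\pm\D(\D\xi\wedge\beta)$, Stokes gives $\varphi\Res_f(\xi)=\int\D\xi\wedge\omega^\varphi=0$ for every admissible $\xi$.

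The heart of the argument is $(ii)\Rightarrow(i)$, and this is where the Cohen--Macaulay hypothesis is used: by Auslander--Buchsbaum it forces \eqref{complex1} to have length exactly $p=\operatorname{codim}\Z$, so that $\omega^\varphi$ sits in the \emph{top} nonvanishing Dolbeault group of $\Omega\setminus\Z$. I would argue by descent. From $\nabla_fv=\varphi$ one reads off $\D v_k=(-1)^kf^{k+1}v_{k+1}$, so as soon as $v_{k+1}=\dots=v_p=0$ the surviving top term $v_k$ is $\D$-closed and defines a class in $H_{\D}^{(0,k-1)}(\Omega\setminus\Z)$; if that class vanishes one writes $v_k=\D w$ and replaces $v$ by $v+\nabla_fw$, which annihilates $v_k$ and only modifies $v_{k-1}$. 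The step $k=p$ is exactly the hypothesis $\omega^\varphi=0$, and every lower step is automatic: taking $\Omega$ a small Stein ball, the long exact sequence relating $H^\bullet(\Omega,\mathcal{O})$, $H^\bullet(\Omega\setminus\Z,\mathcal{O})$ and the local cohomology $H^\bullet_{\Z}(\Omega,\mathcal{O})$, together with the depth vanishing $H^j_{\Z}(\Omega,\mathcal{O})=0$ for $j<p$, yields $H_{\D}^{(0,q)}(\Omega\setminus\Z)=0$ for $1\le q\le p-2$. The descent therefore ends with $v=v_1$ holomorphic on $\Omega\setminus\Z$ satisfying $f^1v_1=\varphi$; as $\operatorname{codim}\Z=p\ge2$, Hartogs' theorem extends $v_1$ across $\Z$, so $f^1v_1=\varphi$ holds on all of $\Omega$ and $\varphi\in\J$.

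Finally $(iii)\Rightarrow(ii)$, which I expect to be the main obstacle, must turn the vanishing of the functional $\xi\mapsto\int\D\xi\wedge\omega^\varphi$ into the vanishing of the cohomology \emph{class} $\omega^\varphi$; this is a Serre-duality statement on the noncompact manifold $\Omega\setminus\Z$. For admissible $\xi$ the form $\D\xi$ is $\D$-closed, compactly supported, and supported away from $\Z$, so it represents a class in $H_c^{(n,n-p+1)}(\Omega\setminus\Z)$, and every such class arises this way: since $H_c^{(n,n-p+1)}(\Omega)=0$ on the Stein ball $\Omega$ (here $p\ge2$), one can solve $\D\xi=\eta$ with $\xi$ compactly supported for any closed $\eta$ supported off $\Z$. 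The pairing $([\eta],[\omega])\mapsto\int\eta\wedge\omega$ descends to cohomology and, by Serre duality between $H_{\D}^{(0,p-1)}(\Omega\setminus\Z)$ and $H_c^{(n,n-p+1)}(\Omega\setminus\Z)$, is non-degenerate, so $(iii)$ forces $\omega^\varphi=0$. The delicate point is the Hausdorff/closed-range property underlying this duality precisely in the critical degree $p-1$, where the cohomology of $\Omega\setminus\Z$ is nonzero; verifying that---rather than any explicit computation---is the step I expect to demand the most care.
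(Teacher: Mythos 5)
Your implications $(i)\Rightarrow(ii)$ and $(ii)\Rightarrow(iii)$ are correct and coincide with the paper's. Your $(ii)\Rightarrow(i)$ is also essentially sound, and it is a genuinely different route: the paper never isolates this implication (it closes the cycle by proving $(iii)\Rightarrow(i)$ directly), whereas you run the $\nabla_f$-descent on $\Omega\setminus\Z$ using the classical vanishing $H^{0,q}_{\dbar}(\Omega\setminus\Z)\cong H^{q+1}_{\Z}(\Omega,\mathcal{O})=0$ for $1\le q\le p-2$ (Scheja-type depth vanishing) plus Hartogs extension in codimension $\ge 2$. Those inputs are classical and Hironaka-free, so this part is legitimate, though it trades the paper's self-contained Koppelman-formula lemmas for sheaf-theoretic machinery.

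The genuine gap is $(iii)\Rightarrow(ii)$, which is the implication carrying the entire duality content, and it is exactly the step you leave unverified. Serre duality on the noncompact, non-Stein manifold $\Omega\setminus\Z$ gives a \emph{perfect} pairing between $H^{0,p-1}_{\dbar}(\Omega\setminus\Z)$ and $H^{n,n-p+1}_c(\Omega\setminus\Z)$ only if $\dbar$ has closed range in the relevant degrees; without that separation property, the hypothesis that $[\omega^{\varphi}]$ pairs to zero against all compactly supported classes only places $v_p$ in the \emph{closure} of $\dbar\mathcal{E}_{0,p-2}(\Omega\setminus\Z,E_p)$, not in its image, and your argument stops there. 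Moreover $p-1$ is precisely the critical degree: $H^{0,p-1}_{\dbar}(\Omega\setminus\Z)\cong H^{p}_{\Z}(\Omega,\mathcal{O})$ is the first nonvanishing, generally infinite-dimensional group, so Hausdorffness there is not a routine verification -- for a general Cohen-Macaulay $\Z$ of codimension $p$ it is essentially of the same depth as the duality theorem you are trying to prove. The paper is structured exactly to sidestep this point: by Chirka it arranges $\Omega=\mathbb{B}^{\prime}\times\mathbb{B}^{\prime\prime}$ with $\Z$ inside $\mathbb{B}^{\prime}\times r\mathbb{B}^{\prime\prime}$, and replaces abstract duality on $\Omega\setminus\Z$ by a concrete Hartogs-type extension across the slab $K=\Omega^{\prime}\times\overline{r\mathbb{B}}$ (Proposition \ref{hartog}), proved via an explicitly constructed admissible Koppelman kernel adapted to the product structure (Lemma \ref{lemma2}); there the hypothesis $\varphi\Res_f=0$ enters as the moment condition \eqref{propvill} needed to solve $\dbar$ with compact support in the $z^{\prime\prime}$-direction in the critical degree, and the kernel, not a closed-range argument, does the work. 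Unless you can actually establish the separation of $H^{0,p-1}_{\dbar}(\Omega\setminus\Z)$ (or substitute a concrete extension step of the paper's kind), your proposal proves only $(i)\Leftrightarrow(ii)\Rightarrow(iii)$, and the theorem's key assertion -- that $\varphi\Res_f=0$ forces $\varphi\in\J$ -- remains open.
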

We postpone the proof to the next section.

\begin{remark}
The operator $\nabla_f$ was first introduced by Mats Andersson in \cite{Andersson} and was later used in several papers to define residue currents that coincide with the Coleff-Herrera product in the case of complete intersection.
The advantage of using $\nabla_f$ to define the residue $\Res_f$ is that much of the work in the proof of Theorem \ref{Main} is hidden in the construction of the cohomology classes $\omega^{\varphi}$.
\end{remark}


\begin{example}\label{example} {\rm

Consider the case when $\J=\langle f_1,\ldots,f_p \rangle$ defines a complete intersection. It is well known, \cite{BrunsHerzog},
that the
Koszul complex with coefficients in $\mathcal{O}_0$, i.e., the complex 
\begin{equation*}
0 \to\mathcal{O}_0\otimes\Lambda^p E\overset{\delta_f}{\to} 
\ldots \overset{\delta_f}{\to}\mathcal{O}_0\otimes\Lambda^2 E\overset{\delta_f}{\to}\mathcal{O}_0\otimes E \overset{\delta_f}{\to} \mathbb{C}
\to 0,
\end{equation*}
where $E$ is a complex vector space of dimension $p$ with a basis $e_1,\ldots, e_p$ and where $\delta_f$ is defined as 
\begin{gather*}
\delta_f:\mathcal{O}_0\otimes\Lambda^{k}E\rightarrow\mathcal{O}_0\otimes\Lambda^{k-1} E,\\
\delta_f (\psi\otimes e_{l_1}\wedge\ldots\wedge e_{l_k}) = \psi\sum_{j=1}^n(-1)^{j+1} f_j\otimes e_{l_1}\wedge\ldots\wedge\widehat{e_{l_j}}\wedge\ldots\wedge e_{l_k}
\end{gather*}
is a minimal resolution of $\mathcal{O}_0/\J$. This means that the resolution (\ref{complex1}) is isomorphic to the Koszul 
complex since all minimal resolutions are isomorphic.
In this case $\mathcal{L}^r(\Omega\setminus\Z)$ and $\nabla_f$ in the total complex (\ref{LKomplex}) become 
\begin{equation*}
\mathcal{L}^r(\Omega\setminus\Z)= \bigoplus_k\mathcal{E}_{0,k+r}(\Omega\setminus\Z,E_k)\quad\text{and}\quad \nabla_f=\delta_f-\dbar
\end{equation*}
where $E_k$ is the trivial bundle $\Omega\times\Lambda^k E$.
We define the operator
\begin{equation*}
\cap:\mathcal{E}_{0,r}(\Omega\setminus\Z, E_k) \times \mathcal{E}_{0,s}(\Omega\setminus\Z,E_l)\rightarrow
\mathcal{E}_{0,r+s}(\Omega\setminus\Z,E_{k+l})
\end{equation*}
by letting
\begin{equation*}
d z_I\otimes e_J\cap d z_K \otimes e_L = d z_I\wedge d z_K \otimes e_J\wedge e_L.
\end{equation*}
Let us try to calculate the cohomology class $\omega$ in this case.
Let 
\begin{equation*}
\sigma = \frac{\sum_{j=1}^p\bar{f}_j\otimes e_j}{|f|^2}\quad\text{and}\quad v=\sigma\cap(1+\dbar\sigma+(\dbar\sigma)^{\cap 2}+\ldots + (\dbar\sigma)^{\cap (p-1)}).
\end{equation*}
Then $v\in\mathcal{L}^{-1}(\Omega\setminus\mathcal{Z})$ and since $\nabla_f\sigma = 1$ and $(\dbar\sigma)^{\cap p}=0$ we get that $\nabla_f v = 1$.
This means that a representative for the class $\omega$ is given by $v_p=\sigma\cap(\dbar\sigma)^{\cap (p-1)}$,
and by using that $(\sum_{j=1}^p\bar{f}_j\otimes e_j) ^{\cap 2}=0$ we get that
\begin{equation*}
v_p=\frac{\sum \bar{f}_j\otimes e_j \cap (\sum\dbar \bar{f}_j\otimes e_j)^{\cap (p-1)}}{|f|^{2p}}.
\end{equation*}
Now, $\dbar\bar{f}_j\otimes e_j\cap\dbar\bar{f}_k\otimes e_k =\dbar\bar{f}_k\otimes e_k\cap\dbar\bar{f}_j\otimes e_j$ for all $j,k=1,\ldots,p$ and since 
$\dbar\bar{f}_k=d\bar{f}_k$ we get 
\begin{equation*}
v_p = p!\frac{\sum (-1)^{j-1}\bar{f_j} d\bar{f_1}\wedge\ldots\wedge \widehat{d\bar{f_j}}\wedge\ldots\wedge d\bar{f_p}\otimes e_1\wedge\ldots\wedge e_p}{\left(   |f_1|^2 +\ldots +|f_n|^2   \right)^p}.
\end{equation*}
This shows that in the case of a complete intersection the residue coincide with the cohomological residue in \cite{Passare} and
together with Remark \ref{remark} this shows that $\Res_f$ indeed is a generalization of the Grothendieck residue (\ref{GrotRes}).
}
\end{example}


\section{The proof of Theorem \ref{Main}}
We will need a result that describes when we can solve the $\bar{\partial}$-equation in our situation and also a variant of Hartogs' phenomenon. To prove those results we use an integral representation of smooth $(p,q)$-forms called Koppelman's formula.

Let $\Delta = \{(z,z);z\in\mathbb{C}^n\}\subset \mathbb{C}^n\times\mathbb{C}^n$ and 
\begin{equation*}
b(z)=\frac{\partial |z|^2}{2\pi i|z|^2}.
\end{equation*}
A form $s(\zeta,z)$ in $\Omega\times\Omega$ on the form $s(\zeta,z)=\sum s_j(\zeta_j,z_j)d(\zeta_j - z_j)$ that satisfies $2\pi i\sum s_j(\zeta_j,z_j)(\zeta_j-z_j)=1$ outside the diagonal $\Delta$ and $s(\zeta,z)=b(\zeta-z)$ in a neighborhood of $\Delta$ is called an admissible form (in the sense of Andersson) \cite{Andersson}. For an admissable form $s$ one can prove that $K=s\wedge(\bar{\partial}s)^{n-1}$ is $\bar{\partial}$-closed outside 
$\Delta$. By $K_{p,q}$ we mean the component of $K$ that has bidegree $(p,q)$ in $z$ and $(n-p,n-q-1)$ in $\zeta$.
If $f$ is a smooth $(p,q)$-form then for $z\in D$ it has the representation
\begin{equation*}
f(z)=\bar{\partial}_z\!\!\!\int\limits_{\zeta\in D}\!\! K_{p,q-1}(\zeta,z)\wedge f(\zeta) + \!\int\limits_{\zeta\in D} \!\!K_{p,q}(\zeta,z)\wedge \bar{\partial}f(\zeta) +\!\int\limits_{\zeta\in \partial D}\!\!\!\! K_{p,q}(\zeta,z)\wedge f(\zeta).
\end{equation*}
This representation is referred to as Koppelman's formula. If we want to solve the equation $\bar{\partial}u=f$, where $f$ is $\bar{\partial}$-closed 
in some region $D$, 
Koppelman's formula tells us that it is possible if we can make the boundary integral disappear.

\begin{remark}{ \rm
Koppelman's formula is often stated so that the form $s(\zeta,z)$ is equal to $b(\zeta-z)$, see for example \cite{Demailly}.
The formula above follows from the ordinary Koppelman's formula. One way to see this is to first fix $z_0\in D$ and then write $f=\chi f + (1-\chi)f$ where $\chi$ is a cutoff function with suppport in a small neighborhood $U$ of $z_0$ such that $s(\zeta,z)=b(\zeta-z)$ in $U$. The formula now follows from the ordinary Koppelman's formula because of the $\dbar$-closeness of $K$.
}
\end{remark}

\begin{lemma}\label{lemma2}
Write $\mathbb{C}^n=\mathbb{C}^{n-k}\times\mathbb{C}^k$ and $z=(z^{\prime},z^{\prime\prime})$, $\zeta=(\zeta^{\prime},\zeta^{\prime\prime})$.
Assume that $f$ is a $\bar{\partial}$-closed smooth $(0,q)$-form in $\mathbb{B}=\mathbb{B}^{\prime}\times\mathbb{B}^{\prime\prime}$, where
$\mathbb{B}^{\prime}$ and $\mathbb{B}^{\prime\prime}$ are the Euclidean $(n-k)$ and $k$-balls,
and that $f$ has compact support in the $z^{\prime \prime}$ direction.
Then there exists a solution to $\bar{\partial}u = f$ in a possibly smaller set with compact support in the $z^{\prime \prime}$ direction if $q<k$.
If $q = k$ such a solution exists if and only if
\begin{equation}\label{propvill}
\int\xi\wedge f = 0 
\end{equation}
for all $\bar{\partial}$-closed $(n,n-k)$-forms $\xi$ with compact support in the $z^{\prime}$ direction.
\end{lemma}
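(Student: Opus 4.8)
The plan is to reduce the problem to a fiberwise $\dbar$-problem in the $z''$-variables and to solve that with Koppelman's formula applied in $\C^k$, treating $z'$ as a parameter. Since $f$ has compact support in the $z''$ direction I may extend it by zero so that, for each fixed $z'$, it is a form on all of $\C^k$; the Koppelman (Bochner--Martinelli) operator $B$ in $z''$ then furnishes a homotopy $\mathrm{id}=\dbar_{z''}B+B\dbar_{z''}$ on forms of positive $d\bar z''$-degree, with no boundary contribution because of the compact support. Writing $\dbar=\dbar_{z'}+\dbar_{z''}$ and splitting $f$ by its $d\bar z''$-degree, I will peel off these degrees from the top.

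First I would treat the highest $d\bar z''$-degree component $f_{b_0}$ of $f$. Inspecting the top $d\bar z''$-degree part of $\dbar f=0$ shows that $f_{b_0}$ is $\dbar_{z''}$-closed. If $b_0<k$ — which is automatic when $q<k$ — the compactly supported $\dbar$-cohomology $H^{0,b_0}_{c}(\C^k)$ vanishes, so $\dbar_{z''}w=f_{b_0}$ admits a solution $w$ that can be taken with compact support in $z''$ and smooth dependence on $z'$ (by the parametrized construction discussed below); replacing $f$ by $f-\dbar w$ lowers the top $d\bar z''$-degree while preserving $\dbar$-closedness, compact support in $z''$, and, since we subtract a $\dbar$-exact form, the validity of \eqref{propvill}. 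Iterating, in the case $q<k$ the form $f$ descends to $d\bar z''$-degree $0$, where being $\dbar_{z''}$-closed and compactly supported in $z''$ forces it to vanish (a function holomorphic in $z''$ with compact support in $z''$ is zero). This settles $q<k$, the accumulated $w$'s assembling the desired compactly supported primitive.

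For $q=k$ the same reduction leaves a form of pure $d\bar z''$-degree $k$, i.e. $f=F\,d\bar z''_1\wedge\cdots\wedge d\bar z''_k$, where $\dbar f=0$ now forces $F$ to be holomorphic in $z'$ while it still has compact support in $z''$. Necessity of \eqref{propvill} is immediate from Stokes' theorem: if $\dbar u=f$ with $u$ compactly supported in $z''$ and $\xi$ is a $\dbar$-closed $(n,n-k)$-form with compact support in $z'$, then $\xi\wedge u$ has compact support in $\C^n$ and $\int\dbar(\xi\wedge u)=0$, which yields $\int\xi\wedge f=0$. For sufficiency I would link \eqref{propvill} to the fiberwise Serre-duality obstruction in $\C^k$: testing against $\xi=\chi(z')h(z'')\,dz\wedge d\bar z'$, with $\chi$ compactly supported in $z'$ and $h$ holomorphic, shows that \eqref{propvill} is equivalent to $\int_{\C^k}h(z'')F(z',z'')\,dV''=0$ for every $z'$ and every holomorphic $h$, which is exactly the condition for compactly supported solvability of $\dbar_{z''}u=f$ on each fiber.

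The \emph{main obstacle} is the final assembly: producing a single $u$ that is jointly smooth, has compact support in $z''$ locally uniformly in $z'$, and satisfies $\dbar_{z'}u=0$, so that $\dbar u=f$ on the (possibly shrunken) set $\tilde{\mathbb B}'\times\mathbb B''$. The natural candidate is the Koppelman/Bochner--Martinelli potential $u_0(z',z'')=\int_{\C^k}B_{0,k-1}(\zeta'',z'')\wedge F(z',\zeta'')$, which solves $\dbar_{z''}u_0=f$ and, as $F$ is holomorphic in $z'$, is itself holomorphic in $z'$; it is however only holomorphic in $z''$ outside $\mathrm{supp}_{z''}F$ rather than compactly supported there. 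The vanishing of the fiber pairing means precisely that this holomorphic tail extends, with holomorphic dependence on the parameter $z'$, to a form $H(z',z'')$ that is entire in $z''$; subtracting it gives $u=u_0-H$ with compact support in $z''$ and still $\dbar u=f$. Carrying out this parametrized Hartogs-type extension — controlling smoothness and support jointly in $(z',z'')$, which is also what underlies the compactly supported solvability used in the reduction and is where the shrinking in the $z'$ direction enters — is the technical heart of the argument.
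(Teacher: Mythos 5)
Your necessity direction (Stokes) and your treatment of $q<k$ are sound in outline, and your route — fiberwise degree-peeling plus the vanishing of compactly supported Dolbeault cohomology of $\C^k$ — is genuinely different from the paper's, which instead builds one global admissible Koppelman kernel glued from the Bochner--Martinelli kernel and ball kernels in the $z'$ and $z''$ factors, so that the Koppelman solution automatically vanishes for $|z''|$ near $1$. However, there is a concrete gap in your $q=k$ case, namely the sentence ``the same reduction leaves a form of pure $d\bar z''$-degree $k$.'' Your reduction peels the \emph{top} $d\bar z''$-degree component $f_{b_0}$, using that it is $\dbar_{z''}$-closed and that $H^{0,b_0}_c(\C^k)=0$ for $b_0<k$. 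When $q=k$ the top component generically has degree exactly $k$ — precisely the degree in which fiberwise compactly supported solvability fails (that failure is the whole reason the moment condition \eqref{propvill} exists) — so the procedure cannot even start. And as long as $f_k$ is present, the lower components are not fiberwise closed: the degree-$(b+1)$ part of $\dbar f=0$ gives $\dbar_{z''}f_b=-\dbar_{z'}f_{b+1}\neq 0$ in general, so they cannot be peeled either. Under no reading does top-peeling end at a pure degree-$k$ form: either you are stuck immediately with $f_k$ plus untouched lower components, or (if the top degree happens to be $<k$) you peel all the way down to degree $0$ and never meet the obstruction.

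The missing idea is to peel from the \emph{bottom} instead. The lowest $d\bar z''$-degree component $f_{b_{\min}}$ is $\dbar_{z'}$-closed and has positive $d\bar z'$-degree $k-b_{\min}$, so one solves $\dbar_{z'}w=f_{b_{\min}}$ in the $z'$-variables on a slightly smaller ball $r\mathbb{B}'$, using a Koppelman kernel for $\mathbb{B}'$ that is holomorphic in $z'$; since this operator integrates only in $\zeta'$, it depends smoothly on $z''$ and preserves compact support in $z''$. Subtracting $\dbar w$ raises the bottom degree, preserves $\dbar$-closedness and \eqref{propvill} (because $\xi\wedge w$ is compactly supported when $\xi$ has compact $z'$-support in $r\mathbb{B}'$), and after finitely many steps pushes $f$ into pure $d\bar z''$-degree $k$ with $F$ holomorphic in $z'$; only then does your fiberwise Serre-duality/Hartogs assembly apply. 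Note that this $z'$-direction solving step is where the shrinking of $\mathbb{B}'$ genuinely enters — not, as you suggest, in the parametrized compactly supported solvability in $z''$, which is an integral in $\zeta''$ alone and costs nothing in $z'$. With this repaired, your argument goes through and is an honest alternative to the paper's kernel construction, trading the paper's one-shot choice of $s(\zeta,z)$ (whose component $K_{0,k-1}$ for $|z''|$ near $1$ is itself an admissible $\xi$ in \eqref{propvill}) for an iterative reduction with more bookkeeping but more elementary ingredients.
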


\begin{proof}
The "only if" part of the statement when $q=k$ is clear because if
there is a solution $u$ to $\bar{\partial}u=f$ with compact support in the $z^{\prime\prime}$ direction then
\begin{equation*}
\int\xi\wedge f = \int\xi\wedge\bar{\partial}u = \int\bar{\partial}(\xi\wedge u) = 0 
\end{equation*}
for all $\bar{\partial}$-closed $(n,n-k)$-forms $\xi$ with compact support in the $z^{\prime}$ direction
by Stokes' theorem, since $\xi\wedge u$ has compact support.

Let $\chi^{\prime}$ be a cutoff function in $\mathbb{B}^{\prime}$ that is equal to $1$ in a neighborhood of $\overline{r\mathbb{B}^{\prime}}$, where $r<1$ and let
$\chi^{\prime\prime}$ be a cutoff function in $\mathbb{B}^{\prime\prime}$ that is equal to $1$ in a neighborhood of $\overline{r\mathbb{B}^{\prime\prime}}$.
Set 
\begin{align*}
s(\zeta,z) = &\chi^{\prime} (\zeta^{\prime}) 
\bigg[ 
\chi^{\prime\prime} (z^{\prime\prime})b(\zeta-z) + (1-\chi^{\prime\prime} (z^{\prime\prime}))
\frac{\bar{z}^{\prime\prime}\cdot d(\zeta-z)}{2\pi i(|z^{\prime\prime}|^2 -\zeta^{\prime\prime}\cdot\bar{z}^{\prime\prime})}
\bigg]+\\
+ &(1 - \chi^{\prime} (\zeta^{\prime}))
\bigg[
\frac{\bar{\zeta}^{\prime}\cdot d(\zeta-z)}{2\pi i(|\zeta^{\prime}|^2 - z^{\prime}\cdot\bar{\zeta}^{\prime})}
\bigg].
\end{align*}
Then $s(\zeta,z)$ is admissible for $|z^{\prime}|\leq r$ and for $|\zeta^{\prime\prime}|\leq r$.

Note that we can extend $s$ to an admissible form for $|\zeta^{\prime\prime}|<1$ simply 
by considering $\chi s + ( 1 - \chi )b$ where $\chi$ is a cutoff function in $r\mathbb{B}$. Since we can assume that $\chi$ is $1$ in 
$\operatorname{supp}f$ this extension will be of no interest since $K\wedge f = 0$ outside the suppport of $f$. This means that for our $s$, Koppelman's formula 
will work for all $z^{\prime\prime}$.

If $|\zeta^{\prime}|$ is close to $1$ we get 
\begin{equation*}
s(\zeta,z)=\frac{\bar{\zeta}^{\prime}\cdot d(\zeta-z)}{2\pi i(|\zeta^{\prime}|^2 - z^{\prime}\cdot\bar{\zeta}^{\prime})},
\end{equation*}
which is holomorphic in $z$. Therefore the boundary integral in Koppelman's formula vanishes if $q \geq 0$
since $f$ has compact support in the $\zeta^{\prime\prime}$ direction.
Thus $u(z)=\int K_{0,q-1}\wedge f$ is a solution to $\bar{\partial}u = f$. It remains to show that the solution has 
compact support in the $z^{\prime\prime}$ direction. 
Let $|z^{\prime\prime}|$ be close to $1$. Then 
\begin{align*}
s(\zeta,z) &=\chi^{\prime} (\zeta^{\prime})
\bigg[ 
\frac{\bar{z}^{\prime\prime}\cdot d(\zeta-z)}{2\pi i(|z^{\prime\prime}|^2 -\zeta^{\prime\prime}\cdot\bar{z}^{\prime\prime})}
\bigg]
+ (1 - \chi^{\prime} (\zeta^{\prime}))
\bigg[
\frac{\bar{\zeta}^{\prime}\cdot d(\zeta-z)}{2\pi i(|\zeta^{\prime}|^2 - z^{\prime}\cdot\bar{\zeta}^{\prime})}
\bigg]\\
&=: s_1(\zeta,z) + s_2(\zeta,z).
\end{align*}
We see that $\bar{\partial}_{z}s_2 = 0$ and that both $s_1$ and $s_2$ are $\dbar_{\zeta^{\prime\prime}}$-closed. 
This means that $K_{0,q-1} = 0$ if $q<k$ because of degree reasons since then $n-q>n-k$ and $K_{0,q-1}$ have bidegree $(n,n-q)$ in $\zeta$.
In the case $q=k$ we will show that $K_{0,q-1}$ is $\bar{\partial}_{\zeta}$-closed and has compact support in the $\zeta^{\prime}$-direction. This will actually 
end the proof since then we can use \eqref{propvill} with $\xi = K_{0,k-1}$.
\newline\noindent
Assume $q=k$. Then 
$K_{0,k-1}$ have bidegree $(n,n-k)$ in $\zeta$
and thus $K_{0,k-1}$ 
is $\bar{\partial}_{\zeta}$-closed since we get too many $\zeta^{\prime}$ differentials.
Assume now that $|\zeta^{\prime}|$ and $|z^{\prime\prime}|$ are close to $1$. 
Then
\begin{equation*}
s(\zeta,z)=\frac{\bar{\zeta}^{\prime}\cdot d(\zeta-z)}{2\pi i(|\zeta^{\prime}|^2 - z^{\prime}\cdot\bar{\zeta}^{\prime})},
\end{equation*}
and since it 
do not contain $\zeta^{\prime\prime},z^{\prime\prime},\bar{\zeta}^{\prime\prime}$ or $,\bar{z}^{\prime\prime}$
we may regard it as an admissible form on $\mathbb{B}^{\prime}\times\mathbb{B}^{\prime}$.
In particular, this means that $K=s\wedge (\bar{\partial} s)^{n-k-1}$ is $\bar{\partial}$-closed outside of $\Delta$ which means that $K_{0,k-1}=0$.
\end{proof}

\begin{proposition}[Variant of Hartogs' phenomenon]\label{hartog}
Let $\Omega=\Omega^{\prime}\times\Omega^{\prime\prime}$, where $\Omega^{\prime\prime}$ has dimension $k>1$, be an open set in 
$\mathbb{C}^n$ and let $K=\Omega^\prime\times\overline{r\mathbb{B}}$ for some $r<1$ such that $\overline{r\mathbb{B}}\subset\Omega^{\prime\prime}$. 
If $q<k-1$ then for each smooth $\bar{\partial}$-closed $(0,q)$-form $\nu$ in $(\Omega\setminus K)$ there exists a $\dbar$-closed $(0,q)$-form $\hat{\nu}$ in $\Omega$ such that $\hat{\nu}=\nu$ in 
$\Omega\setminus \hat{K}$ where $\hat{K}$ is a slightly bigger set than $K$. If $q = 0$ we have $\hat{K}=K$.
If $q=k-1$ the above statement is true if 
\begin{equation*}
\int \bar{\partial}\xi\wedge \nu = 0 
\end{equation*}
for all $(n,n-k)$-forms $\xi$ with compact support that are $\bar{\partial}$-closed in a neighborhood of $K$.
\end{proposition}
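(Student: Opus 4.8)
The plan is to reduce the Hartogs extension problem to a $\dbar$-solvability statement of the type established in Lemma \ref{lemma2}, applied on the $z''$-slices where the compact support condition is available. The starting point is the standard trick: given the $\dbar$-closed form $\nu$ on $\Omega\setminus K$, choose a cutoff function $\chi$ that is $1$ on a neighborhood of $K$ and supported in a slightly larger set $\hat K$, and set $\hat\nu = (1-\chi)\nu - u$, where $u$ is to be chosen so as to correct the failure of $(1-\chi)\nu$ to be $\dbar$-closed. Since $(1-\chi)\nu$ already agrees with $\nu$ outside $\hat K$ and extends smoothly by zero across $K$, the form $\hat\nu$ will be the desired extension precisely when $u$ solves $\dbar u = \dbar\big((1-\chi)\nu\big) = -\dbar\chi\wedge\nu =: g$ with $u$ supported in $\hat K$ — in particular with compact support in the $z''$-direction. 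Note that $g$ is a smooth $\dbar$-closed $(0,q+1)$-form supported where $\dbar\chi\neq 0$, hence compactly supported in the $z''$-direction.

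The key step is to invoke Lemma \ref{lemma2} with $k$ the dimension of $\Omega''$ and with the form $g$ in place of $f$ and degree $q+1$ in place of $q$. When $q < k-1$ the relevant degree $q+1$ satisfies $q+1 < k$, so the first (unconditional) alternative of Lemma \ref{lemma2} applies and furnishes a solution $u$ to $\dbar u = g$ with compact support in the $z''$-direction; setting $\hat\nu = (1-\chi)\nu - u$ then gives a $\dbar$-closed extension, with $\hat K$ slightly larger than $K$ to accommodate the support of $\dbar\chi$. When $q = k-1$ we are in the borderline case $q+1 = k$, so solvability is governed by the integral condition \eqref{propvill}: a compactly-supported solution exists if and only if $\int\xi\wedge g = 0$ for every $\dbar$-closed $(n,n-k)$-form $\xi$ with compact support in the $z'$-direction. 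The remaining task is to check that this condition is exactly the hypothesis $\int\dbar\xi\wedge\nu = 0$ supplied in the statement, which I would verify by integrating by parts:
\begin{equation*}
\int\xi\wedge g = -\int\xi\wedge\dbar\chi\wedge\nu = \pm\int\dbar\big(\chi\,\xi\big)\wedge\nu \mp \int\chi\,\dbar\xi\wedge\nu,
\end{equation*}
using that $\dbar\xi = 0$ near $K$ (so the last term drops on the support of $\dbar\chi$) and that $\dbar\nu = 0$ together with Stokes to move $\dbar$ onto $\xi$; the upshot is that $\int\xi\wedge g$ equals (up to sign) $\int\dbar\xi\wedge\nu$ evaluated against the cutoff, which vanishes by hypothesis.

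I expect the main obstacle to be the bookkeeping that matches the integral condition of Lemma \ref{lemma2} to the stated hypothesis: one must be careful that the test forms $\xi$ in Lemma \ref{lemma2} are $\dbar$-closed everywhere and compactly supported in $z'$, whereas the hypothesis concerns forms that are $\dbar$-closed only near $K$ and compactly supported overall, so the reduction requires justifying that multiplying by $\chi$ converts one class into an admissible test form for the other without changing the value of the pairing — this is where the support of $\dbar\chi$ lying in the region $\dbar\xi=0$ is essential. The claim $\hat K = K$ when $q=0$ is a separate elementary point: a $\dbar$-closed $(0,0)$-form is a holomorphic function, and a holomorphic function on $\Omega\setminus K$ with $K$ of the product form $\Omega'\times\overline{r\mathbb B}$ and $k>1$ extends holomorphically across $K$ by the classical Hartogs phenomenon, with the extension agreeing with $\nu$ on all of $\Omega\setminus K$.
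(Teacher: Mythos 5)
Your proposal is correct and is essentially identical to the paper's own proof: both take a cutoff $\chi\equiv 1$ near $K$, form $g=-\dbar\chi\wedge\nu$, solve $\dbar u=g$ with compact support in the $z''$-direction via Lemma \ref{lemma2} (the borderline case $q+1=k$ being handled by checking condition \eqref{propvill} through the observation that $\chi\xi$ is a compactly supported test form that is $\dbar$-closed near $K$), and set $\hat{\nu}=(1-\chi)\nu-u$, with the $q=0$ case settled by the identity theorem. The only cosmetic difference is that you assert $u$ is supported in $\hat{K}$, whereas Lemma \ref{lemma2} only gives compact support in the $z''$-direction—but this suffices, and the paper's proof concludes the same way.
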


\begin{proof}
Let $\chi$ be a cutoff function in $\Omega$ that is identically $1$ in a neighborhood of $K$ and let $g:=(-\bar{\partial}\chi)\wedge\nu$. Then $g$ is $\bar{\partial}$-closed 
in $\Omega$ and
\begin{equation*}
\int\xi\wedge g = -\int\xi\wedge\bar{\partial}\chi\wedge \nu =  \pm\int\bar{\partial}(\xi\wedge\chi)\wedge \nu = 0,
\end{equation*}
for $\bar{\partial}$-closed $(n,n-q-1)$-forms $\xi$ with compact support in the $z^{\prime}$ direction. 
This means that we can use Lemma \ref{lemma2} with $g$ as $f$ and thus there exists a solution to $\bar{\partial}u=g$, with compact support in the $z^{\prime\prime}$ 
direction, in a possibly smaller set. 
Set $\hat{\nu}= (1-\chi)\nu - u$. Then $\dbar\hat{\nu}=0$ and $\hat{\nu}=\nu$ close to the boundary where $|z^{\prime\prime}|=1$.
If $q=0$ the uniqueness theorem for analytic functions imply that $\hat{\nu}=\nu$ in $\Omega\setminus K$.
\end{proof}

\begin{proof}{(Proof of Theorem \ref{Main})}\\
$(i)\Rightarrow(ii)$:
Assume that $\varphi \in \mathcal{J}$.
Let $\Omega$ be an open neighborhood of the origin such that $\mathcal{L}$ is exact for $\Omega\setminus\mathcal{Z}$ and such that there exist 
functions $\psi_j\in \mathcal{O}(\Omega)$, such that 
\begin{equation*}
\varphi = \sum \psi_j f_j.
\end{equation*}
Let $\{ e_j \}$ be a global frame of $E_1$ such that $f^1(1\otimes e_j)=f_j$
and let $v=v_1+\ldots+v_p\in\mathcal{L}^{-1}$ be defined by letting $v_1 = \sum\psi_j\otimes e_j$ and $v_2=v_3=\ldots=v_p=0$. Then $\nabla_f v_1 = \varphi$ and $\omega^{\varphi}=0$ and we are done.\\
$(ii)\Rightarrow(iii)$:
Trivial.\\
$(iii)\Rightarrow (i)$:
Assume that $\varphi\in\mathcal{O}_0$ and that $\varphi\Res_f=0$.
Let again $\Omega$ be such that $\mathcal{L}$ is exact for $\Omega\setminus\mathcal{Z}$ and
let $v=v_1 + v_2 + \ldots + v_p\in\mathcal{L}^{-1}(\Omega\setminus\mathcal{Z})$ be a solution to $\nabla v = \varphi$.
Because of general properties of complex analytic sets 
we may assume that $\Omega$ is the set $\mathbb{B}^{\prime}\times \mathbb{B}^{\prime\prime}$, where $\mathbb{B}^{\prime}\subset\mathbb{C}^{n-p}$ and $\mathbb{B}^{\prime\prime}\subset\mathbb{C}^p$ are the Euclidean balls,
and that $\mathcal{Z}$ do not touch the boundary of $r\mathbb{B}^{\prime\prime}$ for some $r<1$, \cite{Chirka}.
According to Proposition \ref{hartog} we can extend $v_p$ to a $\dbar$-closed form $\hat{v}_p$ in $\Omega$ since $v_p$ fulfills the requirement by the
assumption that $\varphi\Res_f=0$.
We can now solve the equation $\bar{\partial}u_p=\hat{v}_p$ and since $\hat{v}_p=v_p$ close to the boundary where $|z^{\prime\prime}|=1$ there exists a solution in say $\Omega\setminus K$ where $K$ is a set of the same type as in Proposition \ref{hartog}.
Now, in $\Omega\setminus K$ we get that
\begin{align*}
\bar{\partial}(v_{p-1}+f^p u_p) = f^p v_p - f^p \hat{v}_p = 0.
\end{align*}
This means that there exists a solution to $\bar{\partial} u_{p-1} = v_{p-1} + f^p u_p$ in $\Omega\setminus K$ and we note that 
\begin{align*}
\bar{\partial} (v_{p-2}+ f^{p-1} u_{p-1}) &= f^{p-1} v_{p-1} + \bar{\partial} f^{p-1} u_{p-1}= f^{p-1} v_{p-1}- f^{p-1} v_{p-1}\\ &= 0.
\end{align*}
If we repeat the argument above we eventually end up with 
\begin{equation*}
\bar{\partial}(v_1 + f^2 u_2) = 0
\end{equation*}
in a smaller set of the same type, call it $\mathcal{U}$.
Now,
\begin{equation*}
f^1\psi = f^1 v_1 + f^1 f^2 u_2 = f^1 v_1 = \varphi
\end{equation*}
in $\mathcal{U}$ and Proposition \ref{hartog} in the case where $q=0$ completes the proof.
\end{proof}

{ \bf Acknowledgement: } The author would like to thank professor Mats Andersson for suggesting the problem and for valuable comments on the early versions of the paper.
The author would also like to thank the referee for his/her comments and suggestions which helped improve the paper.


\end{document}